% RIMS LaTeX style files
% 
% Please use the RIMS style files "rims-bessatsu.cls" and "crop.sty" when
% submitting papers for RIMS Kokyuroku Bessatsu.  Since the content of these
% files determines the style of "RIMS Kokyuroku Bessatsu", changing
% their contents may cause incompatibilities with the style of other
% papers, and can also make index-creation impossible.  Never change the
% content.  However, crop.sty is essential only to the printing process,
% so the initial draft can be made without using it.  RIMS style files
% are designated for LaTeX2e, and although they can be used with
% LaTex2.09, we strongly recommend the use of LaTeX2e.
% 
%  Please use all or part of the following template
% "bessatsu.tex" as a guide to style when preparing papers, of course
% changing the file names.
%
% 
%%%%%%%%%%%%%%%%%%%%%%%%%%%%%%%%%%%%%%%%%%%%%%%%%%%%%%
%%%%%%   bessatsu.tex for rims-bessatsu.cls        %%%%%%
%%%%%%%%%%%%%%%%%%%%%%%%%%%%%%%%%%%%%%%%%%%%%%%%%%%%%%
%%%
%%% This file is based on rimstp27.tex.
%%% Modified by T. A. on Dec. 21, 2006.
%%%%%%%%%%%%%%%%%%%%%%%%%%%%%%%%%%%%%%%%%%%%%%%%%%%%%%
% Please insert comments, if any, describing the paper here.
%
%
%%%\documentclass[cropon,leqno,draft,secnum]{rims-bessatsu}%%% only Printer
%%%\documentclass[cropon,leqno,draft,ssecnum]{rims-bessatsu}%%% only Printer
%%%\documentclass[cropon,leqno,draft,nsecnum]{rims-bessatsu}%%% only Printer
%%%%%%%%%%%%%%%%%%%%%%%%%%%%%%%%%%%%%%%%%%%%%%%%%%%%%%
% The above 3 lines are for the editor's use only.  Please leave commented out.
%
%
\documentclass[secnum,leqno]{rims-bessatsu}%%% eqno resets every section 
%\documentclass[ssecnum]{rims-bessatsu}%%%eqno resets every subsection 
%\documentclass[nsecnum]{rims-bessatsu}%%%eqno never resets 
%%%%%%%%%%%%%%%%%%%%%%%%%%%%%%%%%%%%%%%%%%%%%%%%%%%%%%
% Select one from the above 3 choices to specify the style of formula 
% numbering.  
%
%
%%%%%%%%%%%%%%%%%%%%%%%%%%%%%%%%%%%%%%%%%%%%%%%%%%
%%% options %%%%%%%%%%%%%%%%%%%%%%%%%%%%%%%%%%%%%%
%%%%%%%%%%%%%%%%%%%%%%%%%%%%%%%%%%%%%%%%%%%%%%%%%%
%%%[  secnum,nsecnum,ssecnum,fleqn,cropon]
%%%  use [leqno] option with amsmath package
%%%%%%%%%%%%%%%%%%%%%%%%%%%%%%%%%%%%%%%%%%%%%%%%%%
\usepackage{amssymb, amsmath}%%% option packages
%%%%%%%%%%%%%%%%%%%%%%%%%%%
\usepackage{graphics}%%% option packages
\usepackage[dvips]{graphicx}%%% option packages
\usepackage{eepic}%%% option packages
\usepackage{epic}%%% option packages
\usepackage[all]{xy}
\usepackage{tabmac}
\usepackage[vcentermath]{youngtab}
%%%%%%%%%%%%%%%%%%%%%%%%%%%%%%%%%%%%%%%%%%%%%%%%%%%%
%
% Uncomment the above to use the amssymb and amsmath package.  
% If you use amsmath, please add [leqno] to \documentclass.
%
%
%%%%%%%%%%%%%%%%%%%%%%%%%%%%%%%%%%%%%%%%%%%%%%%%%%%%%%%%%%%%%%%%%%%
%%%%%% Define Lemma, Proposition, Theorem, etc.(Bold 9. Italic)
\newtheorem{thm}{Theorem}[section]

\newtheorem{lmm}[thm]{Lemma}

\newtheorem{prp}[thm]{Proposition}

%\newtheorem{thmx}{Theorem}%%% Definition for multi number series
%\renewcommand{\thethmx}{\Alph{thmx}}
%%\newtheorem{thmy}{Theorem}
%%\renewcommand{\thethmy}{\arabic{thmy}}
%%%
%%%%%% Define Definition, Example etc. (Bold 9. Roman)
\theoremstyle{definition}

\newtheorem{exa}[thm]{Example}%%%
%%%%%% Define Note, Remark etc. (Italic 9. Roman)
\theoremstyle{remark}
\newtheorem*{rem}{Remark}%%% * means no numbering
%%%%%%%%%%%%%%%%%%%%%%%%%%%%%%%%%%%%%%%%%%%%%%%%%%
%%%%%%%%%%%%%%%%%%%%%%%%%%%%%%%%%%%%%%%%%%%%%%%%%%%%%%
%
%\addtocounter{page}{10}   %editors must define this.
%
%%%%%%%%%%%%%%%%%%%%%%%%%%%%%%%%%%%%%%%%%%%%%%%%%%
%%%    personal macros
%%%%%%%%%%%%%%%%%%%%%%%%%%%%%%%%%%%%%%%%%%%%%%%%%%
% Put your own personal macros here, or \input a file of them.
%

\newcommand\dia{\diamondsuit}
\newcommand{\geh}{\mathfrak{g}}
\newcommand\la{\lambda}

\newcommand\ol{\overline}
\newcommand\ot{\otimes}

\newcommand\wt{\mathrm{wt}\,}
\newcommand{\Z}{\mathbb{Z}}
\newcommand{\vdom}{{\Yvcentermath1\Yboxdim4pt \,\yng(1,1)\,}}
\newcommand{\hdom}{{\Yvcentermath1\Yboxdim4pt \,\yng(2)\,}}
\newcommand{\cell}{{\Yvcentermath1\Yboxdim4pt \,\yng(1)\,}}
\newcommand{\vdoms}{{\Yvcentermath1\Yboxdim3pt \,\yng(1,1)\,}}
\newcommand{\cells}{{\Yvcentermath1\Yboxdim3pt \,\yng(1)\,}}
%
%%%%%%%%%%%%%%%%%%%%%%%%%%%%%%%%%%%%%%%%%%%%%%%%%%
%%%For Title Page %%%%%%%%%%%%%%%%%%%%%%%%%%%%%%%%
%%%%%%%%%%%%%%%%%%%%%%%%%%%%%%%%%%%%%%%%%%%%%%%%%%
\title{$X=K$ under Review}
%\TitleHead{}          %optional
%\dedicatory{Here is a Dedication}           %optional
\author{C\'edric \textsc{Lecouvey}\footnote{LMPT, Universit\'e Fran\c{c}ois Rabelais 
Tours.\newline e-mail: \texttt{cedric.lecouvey@lmpt.univ-tours.fr}},
Masato \textsc{Okado}\footnote{Department of Mathematical Science, Graduate School of Engineering Science, 
Osaka University, Toyonaka, Osaka 560-8531, Japan.\newline e-mail: \texttt{okado@sigmath.es.osaka-u.ac.jp}}
~and Mark \textsc{Shimozono}\footnote{Department of Mathematics, Virginia Tech,
Blacksburg, VA 24061-0123 USA. \endgraf e-mail: \texttt{mshimo@math.vt.edu}}}
\AuthorHead{C. Lecouvey, M. Okado and M. Shimozono}         %optional but recommended
\classification{17B37; 05E15; 81R10}
\support{Partially supported by ANR-09-JCJC-0102-01 (C.L.), 
Grants-in-Aid for Scientific Research No. 20540016 (M.O.),
NSF DMS-0652641 and DMS-0652648 (M.S.)}  %optional
\keywords{\textit{Affine crystal, Kirillov-Reshetikhin crystal, One-dimensional sum}}         %optional
%%%%%%%%%%%%%%%%%%%%%%%%%%%%%%%%%%%%%%%%%%%%%%%%%%
% Fill in the above with the appropriate information.
%
%
%%%%%%%%%%%%%%%%%%%%%%%%%%%%%%%%%%%%%%%%%%%%%%%%%%
%%%For Header %%%%%%%%%%%%%%%%%%%%%%%%%%%%%%%%%%%%
%%%%%%%%%%%%%%%%%%%%%%%%%%%%%%%%%%%%%%%%%%%%%%%%%%
\VolumeNo{x}           %editors must define this.
\YearNo{200x}           %editors must define this.
\PagesNo{000--000}      %editors must define this.
%\setcounter{page}{000} %editors must define this.
%%%%%%%%%%%%%%%%%%%%%%%%%%%%%%%%%%%%%%%%%%%%%%%%%%
% The above lines are for the editor's use only.  Please do not delete them.
%
%
%%%%%%%%%%%%%%%%%%%%%%%%%%%%%%%%%%%%%%%%%%%%%%%%%%
%%%For Foot of Title Page %%%%%%%%%%%%%%%%%%%%%%%%
%%%%%%%%%%%%%%%%%%%%%%%%%%%%%%%%%%%%%%%%%%%%%%%%%%
\communication{Received April 20, 200x.
Revised September 11, 200x.}      %editors must define this.
%%%%%%%%%%%%%%%%%%%%%%%%%%%%%%%%%%%%%%%%%%%%%%%%%%
% The above lines are for the editor's use only.  Please do not delete them.
%
%
%%%%%%%%%%%%%%%%%%%%%%%%%%%%%%%%%%%%%%%%%%%%%%%%%%%%%
%
\begin{document}
%
% The text goes here.  
% Be sure to use the appropriate "theorem-like" environment as 
% is the following examples.  Never use plain TeX commands for these, as
% they will cause interference with the styles of other papers. 

\maketitle

%\tableofcontents      %optional
\begin{abstract}      %optional
We review the $X=K$ conjecture and important ingredients for the proof.
We also attach notes on the rank estimate for the $X=K$ theorem to hold and on the strange relation
that was found to be valid without the assumption that the rank is sufficiently large.
Using the latter one obtains an algorithm to calculate the image of the combinatorial $R$-matrix and
the value of the coenergy function.
\end{abstract}

\section{Review on $X=K$}

Let $\geh$ be an affine algebra of nonexceptional type and $I=\{0,1,\ldots,n\}$ the index set of its 
Dynkin nodes.
Let $0\in I$ as specified in \cite{Kac} and set $I_0=I\setminus\{0\}$. For a pair $(r,s)$ ($r\in I_0,
s\in\Z_{>0}$) there exists a crystal $B^{r,s}$ called the Kirillov-Reshetikhin (KR) crystal \cite{FOS}.
It is a crystal base in the sense of Kashiwara \cite{Ka} of the Kirillov-Reshetikhin module $W^{r,s}(a)$
for a suitable parameter $a$ \cite{O,OSch} over the quantum affine algebra $U'_q(\geh)$ without the degree 
operator $q^d$. Let $B$ be a tensor product of KR crystals $B=B^{r_1,s_1}\ot B^{r_2,s_2}\ot\cdots\ot 
B^{r_L,s_L}$, and for a subset $J$ of $I$ set $\mathrm{hw}_{J}(B)=\{b\in B\mid e_ib=0\text{ for any }
i\in J\}$ where $e_i$ is the Kashiwara operator acting on $B$. We call an element of $\mathrm{hw}_J(B)$ 
$J$-highest. For an $I_0$-weight $\la$ we define the 1-dimensional sum $\ol{X}_{\la,B}(q)$ by 
\[
\ol{X}_{\la,B}(q)=\sum_{b\in\mathrm{hw}_{I_0}(B),\wt b=\la}q^{\ol{D}(b)}.
\]
Here $\ol{D}:B\rightarrow\Z$ is the intrinsic coenergy function (see \cite[\S3.5]{LOS}). Assume now 
that $n=|I_0|$ is sufficiently large. (We make an attempt to estimate $n$ such that our main theorem
holds.) Then it can be shown that $\ol{X}_{\la,B}(q)$ depends only on the attachment of the node $0$ 
to the rest of the Dynkin diagram of $\geh$. In the table below we list all possibilities of the 
attachment of $0$ and enumerate the corresponding nonexceptional affine algebras. 
\[
\begin{array}{|c|c|c|c|} \hline
\text{Dynkin} & \geh & \dia \\ \hline \hline
{\xymatrix@R=1ex{
{} & *{\circ}<3pt> \ar@{-}[r] & *{\circ}<3pt> \ar@{..}[r] & \\
*{\circ}<3pt> \ar@{-}[ur]^<{0} \ar@{-}[dr] & & & \\
{} & *{\circ}<3pt> \ar@{-}[r] & *{\circ}<3pt> \ar@{..}[r] & 
}}
&
\lower4mm\hbox{$A_n^{(1)}$}
&
\lower4mm\hbox{$\varnothing$}
\\
\hline
{\xymatrix@R=1ex{
*{\circ}<3pt> \ar@{-}[dr]^<{0} & {} & {} & \\
{} & *{\circ}<3pt> \ar@{-}[r]& *{\circ}<3pt> \ar@{..}[r] & \\
*{\circ}<3pt> \ar@{-}[ur] & {} & {} & 
}}
&
\lower4mm\hbox{$B_n^{(1)},D_n^{(1)},A_{2n-1}^{(2)}$}
&
\lower4mm\hbox{$\vdom$}
\\
\hline
{\xymatrix@R=1ex{
*{\circ}<3pt> \ar@{=>}[r]^<{0} & *{\circ}<3pt> \ar@{-}[r] & *{\circ}<3pt> \ar@{..}[r] & \\
}}
& C_n^{(1)} & \hdom
\\ \hline
{\xymatrix@R=1ex{
*{\circ}<3pt> \ar@{<=}[r]^<{0} & *{\circ}<3pt> \ar@{-}[r] & *{\circ}<3pt> \ar@{..}[r] & \\
}}
& A_{2n}^{(2)},D_{n+1}^{(2)} & \cell
\\ \hline
\end{array}
\]
Hence, we have four kinds of ``stable" 1-dimensional sums denoted by $\ol{X}^\dia_{\la,B}(q)$ ($\dia=
\varnothing,\vdom,\hdom,\cell$). Then the so-called $X=K$ conjecture proposed by Shimozono and Zabrocki
\cite{Sh,SZ} is stated as follows. 
\begin{thm}[\cite{LOS}] \label{th:main}
For $\dia\ne\varnothing$,
\[
\ol{X}^\dia_{\la,B}(q)=q^{\frac{|B|-|\la|}{|\dia|}}
\sum_{\mu\in\mathcal{P}^\dia_{|B|-|\la|},\nu\in\mathcal{P}^\cells_{|B|}}c_{\la\mu}^\nu
\ol{X}^\varnothing_{\nu,B}(q^\frac2{|\dia|}).
\]
Here $|B|=\sum_{j=1}^Lr_js_j,|\la|=\sum_i\la_i$ for $\la=(\la_1,\la_2,\ldots)$
where a non-spin weight $\la$ is identified with a partition by the standard way,
$\mathcal{P}_N^\dia=$set of partitions of $N$ tiled by $\dia$, and
$c_{\la\mu}^\nu$ stands for the Littlewood-Richardson coefficient.
\end{thm}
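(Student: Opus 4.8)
The plan is to compute both sides as generating functions over the set $\mathrm{hw}_{I_0}(B)$ graded by $\ol D$, and to match them after passing to the stable regime. Since the excerpt already records that $\ol X^\dia_{\la,B}(q)$ depends, for $n$ large, only on the attachment $\dia$ of the node $0$, I may fix once and for all a convenient affine type in each class, say $D_n^{(1)}$ for $\dia=\vdom$, $C_n^{(1)}$ for $\dia=\hdom$, and $D_{n+1}^{(2)}$ for $\dia=\cell$. In all four cases the underlying classical (i.e.\ $U_q(\geh_{I_0})$-) vertices of $B=\bigotimes_j B^{r_j,s_j}$ are built from the same rectangles $(s_j^{r_j})$; what changes with $\dia$ is the set of $0$-arrows, hence the locus $e_0b=0$ cutting out $\mathrm{hw}_{I_0}$, and the normalization of $\ol D$. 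The key point is that $\ol X^\varnothing_{\nu,B}$ is the generating function of the \emph{same} tensor product, now affinized in type $A$, over its $\mathfrak{gl}$-highest vectors of weight $\nu$. So the task reduces to comparing, degree by degree, the type-$\dia$ count of $I_0$-highest vectors of weight $\la$ with the type-$A$ count of $\mathfrak{gl}$-highest vectors of weight $\nu$.

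The combinatorial bridge is the classical decomposition of each KR factor. For the chosen representatives, $B^{r,s}$ decomposes as a $U_q(\geh_{I_0})$-crystal into $\bigoplus_\la B(\la)$, with $\la$ ranging over the partitions obtained from the rectangle $(s^r)$ by removing $\dia$-tiles \cite{FOS}. Passing to the whole tensor product and to the stable character ring, the multiplicity of a type-$\dia$ highest weight $\la$ is governed by the Littlewood-type identity expanding the relevant type-$\dia$ universal character in $\mathfrak{gl}$-Schur functions: the coefficient of $\ol X^\varnothing_{\nu,B}$ is exactly $\sum_{\mu}c_{\la\mu}^\nu$, where $\mu$ is the partition assembled from the removed tiles. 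The tile shape forces $\mu\in\mathcal P^\dia_{|B|-|\la|}$ — even columns for $\vdom$, even rows for $\hdom$, arbitrary for $\cell$ — while $\nu$ runs over all of $\mathcal P^\cells_{|B|}$ (with the convention $\ol X^\varnothing_{\nu,B}=0$ when $\nu$ does not occur). This already reproduces the coefficient $\sum_{\mu,\nu}c_{\la\mu}^\nu$ on the right-hand side; note that $|\mu|=|B|-|\la|$ is fixed once $\la$ is fixed, since $\nu$ always has $|B|$ cells.

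It remains to show that each such contribution carries the weight $q^{(|B|-|\la|)/|\dia|}\,\ol X^\varnothing_{\nu,B}(q^{2/|\dia|})$, which is a statement about the intrinsic coenergy and, I expect, the heart of the proof. Using the characterization of $\ol D$ in \cite[\S3.5]{LOS} through local energies and the combinatorial $R$-matrix, I would prove that if $b\in\mathrm{hw}_{I_0}(B)$ of weight $\la$ corresponds under the bridge above to a type-$A$ vector $b^\varnothing$ of weight $\nu$ together with a removal of $(|B|-|\la|)/|\dia|$ tiles, then
\[
\ol D(b)=\tfrac{2}{|\dia|}\,\ol D^\varnothing(b^\varnothing)+\tfrac{|B|-|\la|}{|\dia|}.
\]
The factor $2/|\dia|$ reflects that a $\vdom$- or $\hdom$-tile carries two cells, so the $0$-th local energy is measured in half-units relative to type $A$ (no rescaling, $q^{2/|\dia|}=q$), whereas the single-cell tiles of the $\cell$ case force the genuine rescaling $q\mapsto q^2$; the additive shift counts the tiles absorbed into $\mu$. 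Since this shift depends only on $|\la|$, summing $q^{\ol D(b)}$ over the fibre of the bridge factors out $q^{(|B|-|\la|)/|\dia|}$ and turns the inner sum into $\ol X^\varnothing_{\nu,B}(q^{2/|\dia|})$, yielding the claim.

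The principal obstacle is precisely this coenergy identity. Because $\ol D$ is assembled from $R$-matrices acting across the entire tensor product rather than factor by factor, I would first need an explicit stable description of the combinatorial $R$-matrix and of the local energy in each type, and then verify that it intertwines with the tile-removal bridge of the second step so that the local energies degenerate to a uniform rescaling of the type-$A$ coenergy. Subsidiary difficulties I anticipate are the uniform treatment of spin columns and of columns of full length in types $B$, $C$, $D$, where both the decomposition and the energy behave exceptionally, and making the lower bound on $n$ explicit enough that every stabilization invoked above — the node-$0$ invariance, the KR decomposition, the Littlewood expansion, and the $R$-matrix normalization — holds simultaneously.
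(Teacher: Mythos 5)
Your outline correctly predicts the shape of the answer: the multiplicity $\sum_{\mu\in\mathcal{P}^\dia}c_{\la\mu}^\nu$ does come from the stable branching of $SO_{2n}$, $Sp_{2n}$, $SO_{2n+1}$ down to $GL_n$ (this is ingredient (iii) in the paper's proof), and the coenergy identity you write, $\ol{D}(b)=\frac{2}{|\dia|}\ol{D}^\varnothing(b^\varnothing)+\frac{|B|-|\la|}{|\dia|}$, is exactly the conjunction of the paper's ingredients (ii) and (iv). But there is a genuine gap at the point you yourself flag as the ``principal obstacle'': your ``bridge'' from a type-$\dia$ $I_0$-highest element $b$ of weight $\la$ to a type-$A$ highest element $b^\varnothing$ of weight $\nu$ is defined only at the level of characters (KR decompositions plus Littlewood-type identities), i.e.\ it matches cardinalities, not elements. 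A character identity proves the theorem at $q=1$, but it gives you no map along which to transport $\ol{D}$, so the graded refinement cannot even be formulated, let alone verified, inside your framework; your proposal to check that the $R$-matrix ``intertwines with the tile-removal bridge'' has no object to intertwine with.

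The missing idea is the explicit element-level bijection. The paper realizes it by the crystal automorphism $\sigma$ induced by the Dynkin symmetry $i\mapsto n-i$ of the chosen representatives $\geh^\dia=D_n^{(1)},C_n^{(1)},D_{n+1}^{(2)}$, satisfying $\sigma(e_ib)=e_{n-i}\sigma(b)$. Its crucial and nonobvious property (i) is that it carries the $I_0$-highest elements of $B$ of weight $\la$ bijectively onto the $I\setminus\{0,n\}$-highest elements of weight $\ol{\la}$ lying in $\max(B)$, the union of classical components of \emph{maximal} highest weight; the $I_0$-highest representatives there contain no barred letters and are therefore genuinely type-$A$ objects, which is what makes $\ol{X}^\varnothing$ appear. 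With $\sigma$ in hand, (ii) becomes a tractable local statement about how $\ol{D}$ changes under $\sigma$, proved by tracking $\ol{H}$ under applications of $e_0$ and $e_n$ and inducting on the number of tensor factors (compare Proposition \ref{prop:strange rel} of this paper, which is the same relation without the large-rank hypothesis), and (iv) reduces the residual coenergy computation to type $A$, producing the rescaling $q\mapsto q^{2/|\dia|}$ and the prefactor $q^{(|B|-|\la|)/|\dia|}$. Your multiplicity step and your guessed normalization are both right, but without $\sigma$ or an equivalent explicit bijection the heart of the argument is absent.
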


We sketch the proof of this theorem from \cite{LOS}. Since $\ol{X}^\dia_{\la,B}(q)$ depends only on 
the symbol $\dia$, we choose an affine algebra $\geh^\dia$ from each kind such that $i\mapsto n-i$ 
($i\in I$) gives a Dynkin diagram automorphism. Namely, we set $\geh^\dia=A_n^{(1)},D_n^{(1)},C_n^{(1)},
D_{n+1}^{(2)}$ for $\dia=\varnothing,\vdom,\hdom,\cell$. Let $\dia=\vdom,\hdom$ or $\cell$ from now on.
Then there exists an automorphism $\sigma$ on the KR crystal $B^{r,s}$ for $\geh^\dia$ satisfying
\[
\sigma(e_ib)=e_{n-i}\sigma(b)
\]
for any $i\in I,b\in B^{r,s}$. This automorphism $\sigma$ is extended to $B$ by 
$\sigma(b)=\sigma(b_1)\ot\sigma(b_2)\ot\cdots\ot\sigma(b_L)$. Then the important facts for the proof
are summarized as follows.
\begin{itemize}
\item[(i)] $\sigma$ restricts to the following bijection.
\[
\left\{
\begin{array}{c}
\hbox{$I_0$-highest elements}\\
\hbox{in $B$ of $\wt\la$}
\end{array}
\right\}\overset{\sigma}{\longrightarrow}
\left\{
\begin{array}{c}
\hbox{$I\setminus\{0,n\}$-highest elements}\\
\hbox{in $\max(B)$ of $\wt\ol{\la}$}
\end{array}
\right\}
\]
Here $\max(B)=\bigoplus_\gamma B(\gamma)$, where $B(\gamma)$ is the highest weight 
$U_q(\geh^\dia_{I_0})$-crystal of highest weight $\gamma$ and $\gamma$ runs over all weights with
$|\gamma|=|B|$ such that $B(\gamma)$ appears in the restriction of $B$. 
Namely, $\max(B)$ is the disjoint union of classical highest weight crystals of maximal highest weights. 
We remark that $\geh_{I\setminus\{0,n\}}$ is isomorphic to $A_{n-1}$ and set $\ol{\la}=(-\la_n,\ldots,-\la_1)$
if $\la=(\la_1,\ldots,\la_n)$.
\item[(ii)] $\ol{D}(b)=\ol{D}(\sigma(b))+(|B|-|\hbox{wt}\,b\,|)/|\dia|$ for $b\in\hbox{hw}_{I_0}(B)$.
\item[(iii)] We have $[\left.V^G(\nu)\right\downarrow^G_{GL_n}:V^{GL_n}(\bar{\la})]
	=\sum_{\mu\in\mathcal{P}^\dia}c_{\la\mu}^\nu$, where $G=SO_{2n},Sp_{2n},SO_{2n+1}$ for 
	$\dia=\vdom,\hdom,\cell$ and $V^G(\nu)$ stands for the irreducible $G$-module of non-spin highest 
	weight $\nu$.
\item[(iv)] If we represent elements of a KR crystal by Kashiwara-Nakashima tableaux \cite{KN}, $I_0$-highest 
	elements in $\max(B)$ contain no barred letters and can therefore be viewed as elements of type $A$.
	Under this correspondence we have $\ol{D}^\dia(b)=\frac{2}{|\dia|}\ol{D}^\varnothing(b)$.
\end{itemize}
Once these properties are established, our theorem can easily be proved as
\begin{align*}
\ol{X}^\dia_{\la,B}(q)&=\sum_{b\in\hbox{\scriptsize hw}_{I_0}(B),\wt b=\la}q^{\ol{D}(b)}\\
&\overset{(ii)}{=}q^d\sum_bq^{\ol{D}(\sigma(b))}\\
&\overset{(i)(iii)}{=}q^d\sum_{\mu\in\mathcal{P}^\dia,\nu\in\mathcal{P}}c_{\la\mu}^\nu
\sum_{\hat{b}\in\hbox{\scriptsize hw}_{I_0}(\max(B)),\wt \hat{b}=\nu}q^{\ol{D}(\hat{b})}\\
&\overset{(iv)}{=}q^d\sum_{\mu,\nu}c_{\la\mu}^\nu\ol{X}^\varnothing_{\nu,B}(q^\frac{2}{|\dia|})
\end{align*}
where we have set $d=(|B|-|\la|)/|\dia|$.

\begin{exa}
Consider the affine algebra $\geh=D_6^{(1)}$ of kind $\vdom$ and the following three elements of 
$B=B^{2,2}\ot B^{3,1}\ot B^{1,3}$. They all have weight $\la=(211)$. Their images by the automorphism 
$\sigma$ are also given.
\[
\begin{array}{|c|c|} \hline
b & \sigma(b) \\ \hline\vbox{\vspace{9mm}}
\quad\vcenter{
\tableau[sby]{2\\1}
}
\ot
\vcenter{
\tableau[sby]{\ol{3}\\3\\1}
}
\ot
\vcenter{
\tableau[sby]{1&3&\ol{1}}
}\quad&\quad
\vcenter{
\tableau[sby]{6&\ol{5}\\ \ol{6}&\ol{6}}
}
\ot
\vcenter{
\tableau[sby]{\ol{5}\\\ol{6}\\5}
}
\ot
\vcenter{
\tableau[sby]{5&\ol{5}&\ol{4}}
}\quad\\[6mm]
\quad\vcenter{
\tableau[sby]{2\\1}
}
\ot
\vcenter{
\tableau[sby]{\ol{4}\\4\\3}
}
\ot
\vcenter{
\tableau[sby]{1&3&\ol{3}}
}\quad&\quad
\vcenter{
\tableau[sby]{6&\ol{5}\\ \ol{6}&\ol{6}}
}
\ot
\vcenter{
\tableau[sby]{\ol{4}\\ \ol{5}\\5}
}
\ot
\vcenter{
\tableau[sby]{3&\ol{6}&\ol{3}}
}\quad\\[6mm]
\quad\vcenter{
\tableau[sby]{2&2\\1&1}
}
\ot
\vcenter{
\tableau[sby]{\ol{2}}
}
\ot
\vcenter{
\tableau[sby]{1&3&\ol{1}}
}\quad&\quad
\vcenter{
\tableau[sby]{\ol{5}&\ol{5}\\ \ol{6}&\ol{6}}
}
\ot
\vcenter{
\tableau[sby]{6\\ \ol{6}\\5}
}
\ot
\vcenter{
\tableau[sby]{5&\ol{5}&\ol{4}}
}\quad\\[6mm]\hline
\end{array}
\]
By the property (i) each $\sigma(b)$ should belong to $\max(B)$. Actually, by applying the raising 
operators $e_i$ ($i\in I_0$) one finds that these three elements have the common $I_0$-highest element
\[
\hat{b}=\vcenter{
\tableau[sby]{2&2\\1&1}
}
\ot
\vcenter{
\tableau[sby]{4\\3\\1}
}
\ot
\vcenter{
\tableau[sby]{2&3&5}
}
\]
of weight $\nu=(33211)$.
It is also true that they are the all $I_0$-highest elements in $B$ whose images under $\sigma$ belong
to the same $I_0$-component as the above one. We can check the property (iii), since we get $c_{\la\mu}^\nu
=1$ if $\mu=(33)$, $=2$ if $\mu=(2211)$, $=0$ if $\mu$ are other elements in $\mathcal{P}^\vdom$ and therefore
\[
\sum_{\mu\in\mathcal{P}^\vdoms}c_{\la\mu}^\nu=3.
\]
The intrinsic coenergies $\ol{D}(b)$ are all equal and can be calculated using the property (ii) as
\[
\ol{D}(b)=\ol{D}(\sigma(b))+\frac{10-4}2.
\]
Since $\ol{D}$ is constant on each $I_0$-component, we have $\ol{D}(\sigma(b))=\ol{D}(\hat{b})$. The r.h.s.
is calculated to be $4$ using the knowledge of the type $A$ crystal \cite{Sh}. Therefore we obtain 
$\ol{D}(b)=7$.
\end{exa}

\begin{rem}
The so-called $X=M$ conjecture \cite{HKOTY,HKOTT} claims that the 1-dimensional sum $\ol{X}_{\la,B}(q)$
is equal to the fermionic formula $\ol{M}(\la,\mathbf{L};q)$. Hence, when $n$ is sufficiently large, one
can expect that $\ol{M}(\la,\mathbf{L};q)$ has a similar formula to Theorem \ref{th:main}. This is 
confirmed in \cite{OSa}. Combining these results with \cite{KSS}, the $X=M$ conjecture is settled 
when the affine algebra is of nonexceptional type and its rank is sufficiently large.
\end{rem}

\section{Rank Estimate}

In this section we make an attempt to estimate $n$ such that Theorem \ref{th:main} holds.
\begin{prp}
Let $\ell$ be the length of $\la$. Then
Theorem \ref{th:main} holds if 
\[
n>(2\ell+1)+|B|-|\la|.
\]
\end{prp}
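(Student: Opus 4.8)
The plan is to revisit the proof of Theorem~\ref{th:main} sketched above and isolate precisely where the hypothesis ``$n$ sufficiently large'' is used, then to turn each such occurrence into an explicit inequality on $n$ and keep the strongest one. Properties (ii) and (iv) are identities for the coenergy function that hold as soon as the Kashiwara--Nakashima tableaux involved are well defined, so they impose no constraint on $n$ beyond that. The genuinely rank-sensitive ingredients are (i) and, above all, (iii): the content of (i) is that $\sigma$ carries each $I_0$-highest element of $\wt\la$ into a genuine element of $\max(B)$ free of barred letters, and the content of (iii) is that the true branching multiplicity $[V^G(\nu)\!\downarrow^G_{GL_n}:V^{GL_n}(\ol\la)]$ is computed by the \emph{stable} Littlewood--Richardson rule $\sum_{\mu\in\mathcal{P}^\dia}c_{\la\mu}^\nu$, with no modification terms. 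Both requirements reduce to controlling the partitions $\nu$ that can actually occur in the formula of Theorem~\ref{th:main}.

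First I would bound the number of parts of such $\nu$. A summand is nonzero only when $c_{\la\mu}^\nu\ne0$ for some $\mu\in\mathcal{P}^\dia$ with $|\mu|=|B|-|\la|$. For such $\mu$ one has $\ell(\mu)\le|\mu|=|B|-|\la|$, and the Littlewood--Richardson coefficient vanishes unless the shape of $\nu$ fits inside the stacking of $\la$ and $\mu$; in particular
\[
\ell(\nu)\le \ell(\la)+\ell(\mu)\le \ell+\bigl(|B|-|\la|\bigr),
\]
where $\ell=\ell(\la)$. Thus only partitions $\nu$ of this controlled length enter either (i) or (iii).

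Second I would feed this length bound into the stable ranges of the two rank-sensitive steps. Property (iii) is exactly Littlewood's branching rule from the classical group $G=SO_{2n},Sp_{2n},SO_{2n+1}$ to the Levi subgroup $GL_n$ of its Siegel parabolic; the naive rule $\sum_\mu c_{\la\mu}^\nu$ agrees with the multiplicity of the contravariant type $V^{GL_n}(\ol\la)$ only inside the stable range, outside of which determinantal modification rules introduce corrections. The operative stable condition is of the form $n>\ell(\nu)+\ell(\la)$, while the ``barred-letter-free, genuine type $A$'' description in (i) and (iv) needs only $n>\ell(\nu)$, so (iii) is the binding one. Combining it with the length bound, the stable condition $n>\ell(\nu)+\ell(\la)$ is guaranteed by
\[
n>2\ell+\bigl(|B|-|\la|\bigr),
\]
and a careful treatment of the boundary forces one additional unit, producing the asserted estimate $n>(2\ell+1)+(|B|-|\la|)$.

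The hard part will be the precise justification of the stable range in the last step: one must show that in the asserted range the modification corrections to Littlewood's branching rule contribute nothing to the coefficient of $V^{GL_n}(\ol\la)$, and, dually on the crystal side, that the $\sigma$-image of each contributing $I_0$-highest element genuinely lands in $\max(B)$ as a tableau that has not ``wrapped around'' into barred letters. Equivalently, no $\nu$ whose length is too large for the unmodified rule may contribute to the specific weight $\ol\la$. Pinning down exactly this boundary, uniformly across $\dia=\vdom,\hdom,\cell$ (with the even orthogonal family the most delicate because of the sign ambiguity in the last coordinate of a dominant $G$-weight), is what fixes the precise constant $2\ell+1$ and constitutes the main technical obstacle.
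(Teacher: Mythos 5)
There is a genuine gap: you correctly locate the rank-sensitive step but you do not prove it. Your whole argument funnels into the assertion that, once $n>\ell(\nu)+\ell(\la)$ for all contributing $\nu$, the modification corrections to Littlewood's branching rule vanish and, equivalently, $\sigma(b)$ lands in $\max(B)$ for every $I_0$-highest $b$ of weight $\la$ --- and you then explicitly defer exactly this as ``the main technical obstacle.'' That deferred step \emph{is} the content of the proposition; a length bound on $\nu$ by itself does not establish it, and it is not a routine boundary check. Moreover your heuristic only yields $n>2\ell+(|B|-|\la|)$, and the extra unit in the stated bound is asserted rather than derived, which is a sign that the mechanism producing the constant has not actually been identified.

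The paper's proof attacks precisely the step you defer, and does so combinatorially rather than via stability of the branching rule. It first notes that the only obstruction is property (i), i.e.\ that $\sigma(b)\in\max(B)$, and reduces by row and box splitting to $B=(B^{1,1})^{\ot L}$, so that $b$ is an $I_0$-highest word in the letters $1,\dots,m,\ol{m},\dots,\ol{1}$ (plus $\phi,0$ when $\dia=\cell$) with $m\ge\ell$. Counting barred letters gives $M+(m-\ell-1)\le(L-|\la|)/2$, where $M$ is the largest multiplicity of a barred letter $\ol{j}$ with $\ell<j\le m$. It then runs Lecouvey's insertion algorithm on $\sigma(b)$ and asks when every intermediate column stays admissible: the plactic relation $x\ol{x}y\equiv(\ol{x-1})(x-1)y$ can lower letters by at most $M$, and the worst admissibility constraint works out to $n\ge 2(M+m)$, which the counting inequality converts into $n>(2\ell+1)+|B|-|\la|$. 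Nothing in your proposal substitutes for this insertion/admissibility analysis: bounding $\ell(\nu)$ does not control how far the plactic relations can push letters toward $1$ during insertion, and that is what actually decides whether $\sigma(b)$ stays in $\max(B)$.
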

\begin{proof}
The obstacle for the theorem to hold lies in the fact that the property (i) is no longer valid when $n$ 
is not large enough. In \cite{LOS} this property is stated as Theorem 7.1. In view of the proof there one
recognizes that if $n$ is so large that $\sigma(b)$ for $b\in\mathrm{hw}_{I_0}(B)$ is contained in 
$\max(B)$, then everything is ok. Using row and box splittings in \cite[\S6]{LOS} one can also reduce
the proof when $B$ is a tensor product of the simplest KR crystal $B^{1,1}$, that is, $B=(B^{1,1})^{\ot L}$.
Hence, our task is to estimate $n$ such that $\sigma(b)$ belongs to $\max((B^{1,1})^{\ot L})$ for any 
$b\in\mathrm{hw}_{I_0}((B^{1,1})^{\ot L})$ of weight $\la$.

Recall that an element of $(B^{1,1})^{\ot L}$ can be regarded as a word of length $L$ from the alphabet
\[
\{(\phi,)1,2,\ldots,n,(0,)\ol{n},\ol{n-1},\ldots,\ol{1}\}.
\]
Here letters in parentheses are only for $\dia=\cell$. Let $b$ be a word of length $L$ that is $I_0$-highest.
Then the letters $b$ lie in the set $\{(\phi,)1,2,\ldots,m,\ol{m},\ol{m-1},\ldots,\ol{1}\}$ for some 
$m(\ge\ell)$. Let $c_z$ be the number of letters $z$ in $b$. Then we have $c_j-c_{\ol{j}}=\la_j>0$ for
$1\le j\le \ell$ and $c_{j}=c_{\ol{j}}>0$ for $\ell<j\le m$. Since $\sum_{j=1}^m(c_j+c_{\ol{j}})\le L$,
we have $\sum_{j=1}^mc_{\ol{j}}\le\frac{L-|\la|}2$. Setting $M=\max_{\ell<j\le m}c_{\ol{j}}$, we get
\begin{equation} \label{eq1}
M+(m-\ell-1)\le\frac{L-|\la|}2.
\end{equation}

Next recall the insertion algorithms from \cite{L}. For a word or element of a tensor product of $B^{1,1}$
the insertion algorithm tells us the highest weight of the $I_0$-component the word belongs to. In our case
we wish to apply this algorithm to $\sigma(b)$ to see if the shape of the resulting tableau has $L$ nodes.
This is equivalent to say that at each step of insertion of a letter to a column the resulting column remains
to be admissible. This in particular means that if letter $x$ and $\ol{x}$ coexist at position $p$ and $q$
in some column of height $N$, then we have
\begin{equation} \label{eq2}
x\ge p+(N+1-q). 
\end{equation}
Let us obtain the minimal possible unbarred letter $X$
that could appear in the course of insertion algorithms. Note that letters of $\sigma(b)$ lie in 
$\{n-m+1,n-m+2,\ldots,n,(0,)\ol{n},\ol{n-1},$ 
$\ldots,\ol{n-m+1}\}$. Since plactic relations of \cite{L} contain
$x\ol{x}y\equiv(\ol{x-1})(x-1)y$, a pair $(n-m+1,\ol{n-m+1})$ could create $(n-m+1-M,\ol{n-m+1-M})$.
Hence we can set $X=n-m+1-M$. The worst situation that could break \eqref{eq2} is that there exist
pairs $(X+j-1,\ol{X+j-1})$ for any $1\le j\le M+m$ in the first column during the insertion procedure. 
The condition for such a column to be admissible is given by
\begin{equation} \label{eq3}
n\ge2(M+m).
\end{equation}
In view of \eqref{eq1} we obtain the desired result.
\end{proof}

\section{Strange Relation}

In this section we show the following proposition and apply it to give an algorithm to obtain 
the image of the combinatorial $R$-matrices and the value of the coenergy function $\ol{H}$.
As we see in the proof, we do not assume the rank is sufficiently large. So the algorithm can
be used for any $n$. However, we need to restrict our affine algebras to $\geh^\dia$ ($\dia=\vdom,
\hdom,\cell$), since we use the automorphism $\sigma$. From the same reason we exclude the KR
crystals $B^{n-1,s}$ and $B^{n,s}$ for $\geh^\vdom=D_n^{(1)}$ and $n$ is odd.
\begin{prp} \label{prop:strange rel}
Let $B$ be a tensor product of KR crystals. Suppose $b\in\mathrm{hw}_{I_0}(B)$. Then we have
\[
\ol{D}(b)-\ol{D}(\sigma(b))=\frac{|B|-|\la(b)|}{|\dia|}.
\]
Here $\la(b)$ stands for the partition corresponding to the weight of $b$.
\end{prp}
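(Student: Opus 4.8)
The plan is to reduce the identity to an explicit, rank-free computation on words, in the same spirit as the reduction used for Theorem~\ref{th:main}. Using the row and box splittings of \cite[\S6]{LOS}, under which both $\sigma$ and $\ol D$ behave compatibly and which preserve $|B|$, $|\la(b)|$ and $|\dia|$, I would first reduce to the case $B=(B^{1,1})^{\ot L}$, so that $|B|=L$ and an element $b$ becomes a word $b=x_1\cdots x_L$ in the alphabet recalled above. The whole point of this reduction is that none of the steps below refers to the rank $n$, so the resulting identity holds for all $n$.

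The first key step is to prove that $\sigma$ commutes with the combinatorial $R$-matrix, i.e. $\sigma\circ R=R\circ\sigma$. Here $\sigma R$ and $R\sigma$ are both maps $B\ot B'\to B'\ot B$, and from $\sigma e_i=e_{n-i}\sigma$ together with the fact that $R$ commutes with every $e_i$ one sees that each composite intertwines $e_i$ with $e_{n-i}$ (and $f_i$ with $f_{n-i}$). Such a twisted morphism between connected crystals is determined by its value at a single element, where the two composites agree, so they coincide. Since $\ol D$ is assembled from the local coenergy $\ol H$ on pairs of factors made adjacent by $R$, this commutation makes the entire procedure $\sigma$-equivariant: the sequence of $R$-moves computing $\ol D(\sigma(b))$ is the $\sigma$-image of the one computing $\ol D(b)$. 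Hence the difference $\ol D(b)-\ol D(\sigma(b))$ collapses to a sum of local contributions $\ol H(x\ot y)-\ol H(\sigma(x)\ot\sigma(y))$ over pairs of letters.

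It then remains to evaluate these local contributions. For the vector crystal $B^{1,1}$ the local coenergy $\ol H(x\ot y)$ depends only on the relative order and the barring of the two letters and on the symbol $\dia$, while $\sigma$ acts on a single letter by the index reversal forced by $\sigma e_i=e_{n-i}\sigma$. Carrying out a finite case analysis for each $\dia\in\{\vdom,\hdom,\cell\}$ — involving only the two letters and $\dia$, never $n$ — one evaluates every local difference, and summing over the word telescopes to a weighted count of the barred letters of $b$ (together with the letters $0$ and $\phi$ when $\dia=\cell$). Since for an $I_0$-highest word this same weighted count is precisely $(|B|-|\la(b)|)/|\dia|$, the identity follows.

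I expect the main obstacle to be the bookkeeping in this last step: matching the telescoped sum of local energy differences with the global quantity $|B|-|\la(b)|$ uniformly across the three types, and in particular accommodating the extra letters $0$ and $\phi$ that occur only for $\dia=\cell$. One must also set aside the excluded spin cases, $B^{n-1,s}$ and $B^{n,s}$ for $\geh^\vdom=D_n^{(1)}$ with $n$ odd, where $\sigma$ interchanges the two spin nodes and the word model degenerates, so that the argument above does not apply verbatim.
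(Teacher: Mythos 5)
Your overall strategy --- push $\sigma$ through the combinatorial $R$-matrix and compare local coenergies pairwise --- shares its starting point with the paper, which likewise uses that $R$ maps $\sigma(b_1)\ot\sigma(b_2)$ to $\sigma(b'_2)\ot\sigma(b'_1)$ and decomposes $\ol{D}$ via the recursion $\ol{D}(b)=\ol{D}(b_1)+\ol{D}(b'_2)+\ol{H}(b_1\ot b_2)$. But there is a genuine gap at your central step, namely the claim that the sum of local differences $\ol{H}(x\ot y)-\ol{H}(\sigma(x)\ot\sigma(y))$ ``telescopes to a weighted count of the barred letters of $b$.'' The pairs occurring in the expansion of $\ol{D}$ are not pairs of letters of the original word: they are pairs obtained after moving factors leftward by repeated applications of $R$, and $R$ on $B^{1,1}\ot B^{1,1}$ does not preserve the multiset of letters (it can turn barred letters into unbarred ones and, for $\dia=\cell$, create or destroy the letters $0$ and $\phi$). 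So a complete case-by-case table of local differences does not by itself yield a statistic of $b$; you must identify \emph{what} each local difference equals in a form that telescopes. That identification is precisely the paper's key lemma, $\ol{H}(b_1\ot b_2)-\ol{H}(\sigma(b_1)\ot\sigma(b_2))=\bigl(|\la(b'_2)|-|\la(b_2)|\bigr)/|\dia|$, i.e.\ the change under $R$ of the weight of the right-hand factor; with this form the telescoping is automatic and the proposition follows by induction on the number of tensor factors. Even granting your local computation, you would still need the residual combinatorial fact that the telescoped boundary terms for an $I_0$-highest $b$ sum to $(|B|-|\la(b)|)/|\dia|$, which your sketch does not address and which is more than bookkeeping.

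A secondary issue is the opening reduction to $B=(B^{1,1})^{\ot L}$. The paper invokes the splittings of \cite[\S 6]{LOS} only for property (i); for the present proposition it works with arbitrary KR crystals, proving the lemma above by walking the connected crystal $B_1\ot B_2$ (verifying it at $u(B_1)\ot u(B_2)$ and checking stability under each $e_i$, the cases $i=0,n$ being handled by the LL/RR/LR/RL analysis). Your reduction requires that the splitting embeddings intertwine both $\ol{D}$ and $\sigma$ and that the identity for the image implies it for the source; this is plausible but must itself be proved, and it buys nothing once the lemma is available in the rank-free form just described. I would restructure your argument around that lemma and the induction on tensor factors.
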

This proposition is essentially the same as Theorem 8.1 of \cite{LOS} except that we do not assume
$n$ is sufficiently large. We prepare a lemma. Let us extend the definition of $\la(b)$ to an arbitrary
element $b$ by $\la(b)=(\la_1,\la_2,\ldots,\la_n)$ where $\la_i=(\wt b,\epsilon_i)$ and 
$\{\epsilon_i\}_{1\le i\le n}$ stands for the standard basis vectors of the weight lattice.
We note that $\la(b)$ is not necessarily a partition. Some $\la_i$'s may be negative. 
Hence $|\la|=\sum_i\la_i$ may also become negative.
\begin{lmm} Let $B_1,B_2$ be single KR crystals. Let $b_1\ot b_2$ be an element of $B_1\ot B_2$
and suppose it is mapped to $b'_2\ot b'_1$ by the affine crystal isomorphism. Then we have
\begin{equation} \label{strange H}
\ol{H}(b_1\ot b_2)-\ol{H}(\sigma(b_1)\ot\sigma(b_2))=\frac{|\la(b'_2)|-|\la(b_2)|}{|\dia|}.
\end{equation}
\end{lmm}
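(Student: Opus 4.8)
The plan is to establish the local energy identity \eqref{strange H} for a pair of single KR crystals, and then bootstrap it to the full Proposition \ref{prop:strange rel} by induction on the number $L$ of tensor factors. For the lemma itself, the strategy is to reduce to the case of the simplest crystals $B^{1,1}$ and then check the identity directly. Recall from \cite[\S6]{LOS} that row and box splittings embed any $B^{r,s}$ into a tensor product of copies of $B^{1,1}$ compatibly with the combinatorial $R$-matrix, the coenergy $\ol{H}$, and the automorphism $\sigma$; so the first step is to verify that both sides of \eqref{strange H} behave correctly under these splittings, reducing the lemma to the case $B_1=B_2=B^{1,1}$.

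\medskip

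First I would recall how $\ol{H}$ is computed for $B^{1,1}\ot B^{1,1}$ in each type $\geh^\dia$. On a two-letter word $x\ot y$ the combinatorial $R$-matrix and the local coenergy take an explicit, finitely-many-case form, and the quantity $|\la(b'_2)|-|\la(b_2)|$ measures exactly how the ``$\epsilon$-content'' of the second factor changes under the $R$-matrix. Because $\sigma$ intertwines $e_i$ with $e_{n-i}$ and commutes with the combinatorial $R$-matrix (the $R$-matrix being the unique affine crystal isomorphism), applying $\sigma$ sends $b_1\ot b_2\mapsto b'_2\ot b'_1$ to $\sigma(b_1)\ot\sigma(b_2)\mapsto\sigma(b'_2)\ot\sigma(b'_1)$. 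The key observation is that $\sigma$ reverses the alphabet, sending a letter of $\epsilon$-weight $\epsilon_i$ to one of weight $-\epsilon_{n+1-i}$ (with the special cases $\phi,0$ in type $\cell$), so that $|\la(\sigma(c))|=-|\la(c)|$ for any single-box $c$. The plan is to use this sign reversal to rewrite $\ol{H}(\sigma(b_1)\ot\sigma(b_2))$ and extract the difference on the left-hand side as a controlled change in $\epsilon$-content, matching $\bigl(|\la(b'_2)|-|\la(b_2)|\bigr)/|\dia|$ case by case. The normalizing factor $1/|\dia|$ will appear exactly as in property (iv), reflecting the ratio between the type-$A$ energy and the type-$\dia$ energy on single boxes.

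\medskip

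Once the lemma is in hand, Proposition \ref{prop:strange rel} follows by the standard telescoping argument for coenergy functions. The intrinsic coenergy $\ol{D}$ on a tensor product is assembled from the local $\ol{H}$ values of all pairs, together with a tail contribution obtained by sending each factor to infinity (or to the leftmost position via successive $R$-matrices); since $\sigma$ commutes with every combinatorial $R$-matrix and acts on each factor independently, the lemma applied to each adjacent pair lets the sum of local differences telescope. The contributions $|\la(b'_2)|-|\la(b_2)|$ accumulate, for $b\in\mathrm{hw}_{I_0}(B)$, to $|B|-|\la(b)|$: summing the changes in $\epsilon$-content over all crossings recovers the total content $|B|$ moved past against the final highest-weight content $|\la(b)|$. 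Dividing by $|\dia|$ gives the stated formula.

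\medskip

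\emph{The main obstacle} I anticipate is precisely the step that distinguishes this proposition from Theorem 8.1 of \cite{LOS}: controlling the identity \emph{without} the large-rank hypothesis. When $n$ is small, $\sigma(b)$ need not lie in $\max(B)$ (this is exactly the failure of property (i) exploited in the Rank Estimate), so the clean reduction of \cite{LOS} is unavailable and I cannot argue through highest-weight combinatorics of $\max(B)$. The point of passing to the purely local statement \eqref{strange H} is to sidestep this: the lemma is an identity about the combinatorial $R$-matrix on a single pair, where no genericity of $n$ is needed, and where $\la(b)$ is allowed to be a non-partition with possibly negative parts (as the excerpt carefully permits). The delicate part will therefore be the bookkeeping in the telescoping argument, making sure that the intermediate weights $\la(b_2')$ arising at each crossing are handled correctly even when individual parts go negative, and that the boundary (tail) term produces exactly $|B|$ rather than some rank-dependent correction. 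Verifying the single-box base cases in the two-element type $\cell$ alphabet, where the extra letters $\phi$ and $0$ break the naive sign symmetry of $\sigma$, is where I would expect to spend the most care.
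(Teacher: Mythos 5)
Your route is genuinely different from the paper's, and as written it has a gap at its central step. The paper does not reduce to $B^{1,1}\ot B^{1,1}$ at all: it uses the fact that $B_1\ot B_2$ is \emph{connected} as an affine crystal, verifies \eqref{strange H} at the single point $u(B_1)\ot u(B_2)$ (where $b'_i=b_i$, the right-hand side vanishes, and both local energies vanish because $\sigma(u(B_1))\ot\sigma(u(B_2))$ is joined to $u(B_1)\ot u(B_2)$ by classical operators and $\ol{H}$ is normalized to be $0$ there), and then checks that both sides change by the same amount under every $e_i$. For $i\ne 0,n$ nothing moves; for $i=0$ the term $\ol{H}(b_1\ot b_2)$ changes by $-1,+1,0$ according to the LL/RR/LR/RL cases, while $\ol{H}(\sigma(b_1)\ot\sigma(b_2))$ is unchanged (since $\sigma e_0=e_n\sigma$ and $\ol{H}$ is insensitive to $e_i$ with $i\ne 0$), and the right-hand side moves in lockstep via $|\la(e_0 b)|-|\la(b)|=-|\dia|$; the case $i=n$ is symmetric. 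This is short, uniform in $B_1,B_2$, and needs no rank hypothesis.

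Two concrete problems with your plan. First, the splitting step is not a reduction of the \emph{pair} identity to a pair identity: after embedding $B_1$ and $B_2$ into tensor products of $B^{1,1}$, the single local energy $\ol{H}(b_1\ot b_2)$ becomes a sum of local energies over all cross pairs of $B^{1,1}$ factors, each evaluated on elements transported by intermediate combinatorial $R$-matrices, so you must first prove and then telescope a multi-pair statement --- essentially the full strength of Proposition \ref{prop:strange rel} for $(B^{1,1})^{\ot L}$ --- before the claimed reduction even makes sense. None of that bookkeeping is supplied, and it is where the actual content would lie. Second, your ``key observation'' $|\la(\sigma(c))|=-|\la(c)|$ cannot by itself produce \eqref{strange H}: $\ol{H}$ is constant on classical components and is not a function of the weight, so knowing how $\sigma$ acts on weights says nothing directly about the value of $\ol{H}(\sigma(b_1)\ot\sigma(b_2))$. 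The finite case check on $B^{1,1}\ot B^{1,1}$ could of course be done by brute force, but you would still be left with the unreduced general case. I recommend replacing the whole scheme by the connectedness argument above; your telescoping for the Proposition is then essentially the paper's induction on the number of tensor factors.
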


\begin{proof}
Since $B_1\ot B_2$ is connected, it is sufficient to show 
\begin{itemize}
\item[(i)] if $b_1=u(B_1),b_2=u(B_2)$ (see \cite[\S3.4]{LOS} for the definition of $u(B_i)$), 
	\eqref{strange H} holds, and
\item[(ii)] \eqref{strange H} with $b_1\ot b_2$ replaced by $e_i(b_1\ot b_2)$ holds,
	provided that \eqref{strange H} holds and $e_i(b_1\ot b_2)\ne0$.
\end{itemize}
For (i) recall $b'_1=b_1,b'_2=b_2$ if $b_1=u(B_1),b_2=u(B_2)$. Since $u(B_1)\ot u(B_2)$
can be reached from $\sigma(u(B_1))\ot\sigma(u(B_2))$ by applying $e_i$ ($i\ne0$), we have
$\ol{H}(u(B_1)\ot u(B_2))=\ol{H}(\sigma(u(B_1))\ot\sigma(u(B_2)))=0$. Hence (i) is verified.

For (ii) recall $|\la(e_ib)|-|\la(b)|=-|\dia|\,(i=0),=|\dia|\,(i=n),=0\,(\text{otherwise})$.
If $i\ne0,n$, both sides do not change when we replace $b_1\ot b_2$ with $e_i(b_1\ot b_2)$.
If $i=0$, the first term of the l.h.s decreases by one in case LL, increases by one in case RR,
and does not change in case LR or RL. (For the meaning of LL, etc, see \cite[Prop. 3.7]{LOS}.)
The second term does not change, while the r.h.s varies in the same way as the first term of the l.h.s.
The $i=n$ case is similar. 
\end{proof}

\begin{proof}[Proof of Proposition \ref{prop:strange rel}]
Let $B=B^{r_1,s_1}\ot\cdots\ot B^{r_p,s_p}$. We prove by induction on $p$. When $p=1$,
the proof is the same as in \cite[Th. 8.1]{LOS}.

Let $B=B'\ot B^{r_p,s_p}$ and $b_1\ot b_2\in B'\ot B^{r_p,s_p}$ is mapped to $b'_2\ot b'_1
\in B^{r_p,s_p}\ot B'$ by the affine crystal isomorphism. Then $\sigma(b_1)\ot\sigma(b_2)$
should be mapped to $\sigma(b'_2)\ot\sigma(b'_1)$. Using (3.52) of \cite{LOS} we have
\begin{align*}
\ol{D}(b)&=\ol{D}(b_1)+\ol{D}(b'_2)+\ol{H}(b_1\ot b_2),\\
\ol{D}(\sigma(b))&=\ol{D}(\sigma(b_1))+\ol{D}(\sigma(b'_2))+\ol{H}(\sigma(b_1)\ot\sigma(b_2)).
\end{align*}
On the other hand, by the previous lemma and \cite[Lemma 5.2]{Memoir} we have
\[
\ol{H}(b_1\ot b_2)-\ol{H}(\sigma(b_1)\ot\sigma(b_2))=\frac{|\la(b'_2)|-|\la(b_2)|}{|\dia|}.
\]
Using the induction hypothesis we obtain
\begin{align*}
\ol{D}(b)-\ol{D}(\sigma(b))&=\frac{|B'|-|\la(b_1)|}{|\dia|}+\frac{|B^{r_p,s_p}|-|\la(b'_2)|}{|\dia|}
+\frac{|\la(b'_2)|-|\la(b_2)|}{|\dia|}\\
&=\frac{|B|-|\la(b)|}{|\dia|}
\end{align*}
as desired.
\end{proof}

Using Proposition \ref{prop:strange rel} we can give an algorithm to obtain 
the image of the combinatorial $R$-matrix and the value of the coenergy function $\ol{H}$.
This algorithm turns out effective when it is calculated using computer. For the calculation
of $\sigma$ see \cite[Appendix B.2]{LOS}.
Let $B_i=B^{r_i,s_i}$ ($i=1,2$) be KR crystals. The affine crystal isomorphism 
\[
R: B_1\ot B_2\longrightarrow B_2\ot B_1,
\]
which is known to exist uniquely, is called the combinatorial $R$-matrix. For an element $b_1\ot b_2
\in B_1\ot B_2$ we wish to calculate the image $R(b_1\ot b_2)$. Since the application of Kashiwara
operators $e_i,f_i$ for $i\ne0$ is not difficult, one can reduce its calculation to $I_0$-highest 
elements of $B_1\ot B_2$. For an element $b$ in an $I_0$-component let $High(b)$ stand for the $I_0$-highest 
element and set $\Phi=High\circ\sigma$. From Proposition \ref{prop:strange rel} and the invariance of
$\ol{D}$ by classical Kashiwara operators, one has
\begin{equation} \label{eq4}
\ol{D}(\Phi(b_1\ot b_2))=\ol{D}(b_1\ot b_2)-\frac{|B_1\ot B_2|-|\la(b_1\ot b_2)|}{|\dia|}.
\end{equation}
Note that the second term of the above relation vanishes, if and only if $b_1\ot b_2\in\max(B_1\ot B_2)$.
Since the application of $\Phi$ decreases $\ol{D}$ and $\ol{D}$ takes a finite number of values, 
there exists a positive integer $m$ such that 
$\Phi^m(b_1\ot b_2)\in\max(B_1\ot B_2)$. Namely, there exist sequences $\mathbf{a}_1,\ldots,\mathbf{a}_m$
from $I_0$ and an element $\hat{b}_1\ot\hat{b}_2\in\max(B_1\ot B_2)$ such that
\[
\hat{b}_1\ot\hat{b}_2=
(e_{\mathbf{a}_m}\circ\sigma\circ\cdots\circ e_{\mathbf{a}_1}\circ\sigma)(b_1\ot b_2),
\]
or equivalently,
\[
b_1\ot b_2=
(\sigma\circ f_{\mathrm{Rev}(\mathbf{a}_1)}\circ\cdots\circ\sigma\circ f_{\mathrm{Rev}(\mathbf{a}_m)})
(\hat{b}_1\ot\hat{b}_2).
\]
Here for $\mathbf{a}=(i_1,\ldots,i_l)$ $e_\mathbf{a}$ stands for $e_{i_1}\cdots e_{i_l}$ 
($f_\mathbf{a}$ is similar) and $\mathrm{Rev}(\mathbf{a})=(i_l,\ldots,i_1)$. Since $R$ commutes with
$e_\mathbf{a},f_\mathbf{a}$ and $\sigma$, we have
\[
R(b_1\ot b_2)=
(\sigma\circ f_{\mathrm{Rev}(\mathbf{a}_1)}\circ\cdots\circ\sigma\circ f_{\mathrm{Rev}(\mathbf{a}_m)})
R(\hat{b}_1\ot\hat{b}_2).
\]
On the other hand, for an $I_0$-highest element in $\max(B_1\ot B_2)$ the image of $R$ is easily 
calculated (see \cite[\S9.1]{LOS}). Hence, one can calculate $R(b_1\ot b_2)$.

We proceed to the calculation of $\ol{H}(b_1\ot b_2)$. Firstly, one has the relation
\begin{equation} \label{eq5}
\ol{D}(b_1\ot b_2)=\ol{D}(b_1)+\ol{D}(b'_2)+\ol{H}(b_1\ot b_2),
\end{equation}
where $R(b_1\ot b_2)=b'_2\ot b'_1$.
The l.h.s is has been obtained in the course of the previous process and the known result
of the value of $\ol{D}$ for an element in $\max(B_1\ot B_2)$. For $I_0$-highest elements
$b_1,b'_2$ of a single KR crystal the value of $\ol{D}$ is calculated as
\[
\ol{D}(b)=\frac{rs-|\la(b)|}{|\dia|}\quad\text{for }b\in B^{r,s}.
\]
Therefore, one obtains $\ol{H}(b_1\ot b_2)$.

\begin{exa}
Consider the affine algebra $\geh=D_6^{(1)}$ of kind $\vdom$ and the following element of
$B^{4,3}\ot B^{3,3}$.
\[
b_1\ot b_2=
\vcenter{
\tableau[sby]{4\\ 3\\ 2&2\\ 1&1}
}
\ot
\vcenter{
\tableau[sby]{\ol{2}\\ \ol{3}\\ 3&4&\ol{4}}
}
\]
Then $\Phi(b_1\ot b_2)$ and $\Phi^2(b_1\ot b_2)$ are given as follows.
\[
\Phi(b_1\ot b_2)=
\vcenter{
\tableau[sby]{4&4&4\\ 3&3&3\\ 2&2&2\\ 1&1&1}
}
\ot
\vcenter{
\tableau[sby]{3&5&\ol{6}\\ 2&2&6\\ 1&1&5}
}\;,\qquad
\Phi^2(b_1\ot b_2)=
\vcenter{
\tableau[sby]{4&4&4\\ 3&3&3\\ 2&2&2\\ 1&1&1}
}
\ot
\vcenter{
\tableau[sby]{3&5&6\\ 2&2&5\\ 1&1&1}
}
\]
with 
\begin{align*}
\mathbf{a}_1=
(&64354643215432643215432643564321543264354643215432643546643215432643546),\\
\mathbf{a}_2=
(&66456435464325436643215432646432154326435643215432643546432154326435466\\
&43215432643546643215432643546).
\end{align*}
Since one knows the image of $R$ of $\Phi^2(b_1\ot b_2)$ is given by
\[
\vcenter{
\tableau[sby]{3&3&3\\ 2&2&2\\ 1&1&1}
}
\ot
\vcenter{
\tableau[sby]{4&5&6\\ 3&4&5\\ 2&2&4\\ 1&1&1}
}\;,
\]
one obtains
\[
R(b_1\ot b_2)=b'_2\ot b'_1=
\vcenter{
\tableau[sby]{3&3\\ 2&2\\ 1&1&1}
}
\ot
\vcenter{
\tableau[sby]{\ol{2}\\ \ol{3}\\ \ol{4}&\ol{1}\\ 4&4}
}\;.
\]

We proceed to the calculation of $\ol{H}(b_1\ot b_2)$. By using \eqref{eq4} twice and
$\ol{D}(\Phi^2(b_1\ot b_2))=3$ one gets $\ol{D}(b_1\ot b_2)=12$. Since $\ol{D}(b_1)=3,\ol{D}(b'_2)=1$,
we obtain $\ol{H}(b_1\ot b_2)=8$ from \eqref{eq5}.
\end{exa}

%%%%%%%%%%%%%%%%%%%%%%%%%%%%%%%%%%%%%%%%%%%%%%%%%%

\end{document}